\documentclass[11pt]{amsart}

\usepackage[margin=1in]{geometry}
\usepackage[T1]{fontenc}
\usepackage[utf8]{inputenc}
\usepackage{amsmath,amssymb,amsthm}
\usepackage{hyperref}

\numberwithin{equation}{section}

\usepackage{graphicx,color}

\newtheorem{theorem}{Theorem}[section]
\newtheorem{lemma}[theorem]{Lemma}

\theoremstyle{definition}

\theoremstyle{remark}

\newcommand{\Sone}{\mathbb S^1}
\newcommand{\C}{\mathbb C}
\newcommand{\Z}{\mathbb Z}

\newcommand{\VMO}{\operatorname{VMO}}
\newcommand{\BMO}{\operatorname{BMO}}

\newcommand{\ind}{\mathbf 1}

\title[Degenerate constants: problems of Brezis]{Degenerate constants in degree inequalities for Sobolev circle maps: on some problems posed by Brezis}

\author{Xu'an Dou}\address{Beijing International Center for Mathematical Research, Peking University, Beijing, 100871, China.}\email{dxa@pku.edu.cn}

\author{Zeyu Jin}\address{School of Mathematical Sciences, Peking University, Beijing, 100871, China.}\email{jinzy@pku.edu.cn}

\date{\today}

\begin{document}

\begin{abstract}

We sharpen the constants in two degree inequalities for circle-valued Sobolev
maps in degenerate regimes, as \(p\to1^+\) or \(\delta\to0^+\). The two proofs
use the same power trick together with elementary estimates. The results answer
two open problems posed by Brezis \cite[Section 5]{Brezis2023Favorite}. The proofs are obtained by generative AI and are verified by the authors.

\end{abstract}

\maketitle

\section{Introduction}

Let
\[
\Sone=\{z\in\C: |z|=1\}.
\]
We identify \(\Sone\) with the unit circle in \(\C\), equip it with arclength
measure, and write \(|x-y|\) for the Euclidean chordal distance. For a continuous map \(f:\Sone\to\Sone\), \(\deg f\in\Z\) denotes the
usual winding number. We shall also use the VMO degree of Brezis and Nirenberg:
if \(f\in\VMO(\Sone;\Sone)\), then \(\deg f\) is well-defined, agrees with the
classical degree for continuous maps, and is continuous with respect to
\(\BMO\)-convergence \cite{BrezisNirenberg1995}. Here \(u\in\VMO(\Sone)\) means
that
\[
\lim_{r\downarrow0}\sup_{|I|\le r}\frac1{|I|}\int_I |u-u_I|\,\mathrm{d} x=0,
\qquad
u_I:=\frac1{|I|}\int_I u\,\mathrm{d} x,
\]
where the supremum is taken over arcs \(I\subset\Sone\). We refer to
\cite[Chapter 12]{BrezisMironescu2021Sobolev} for a systematic treatment of the
VMO degree for circle-valued maps.

The present paper concerns two estimates for this degree. The first lies in the
critical fractional scale \(W^{1/p,p}(\Sone;\Sone)\), which is contained in
\(\VMO(\Sone;\Sone)\) for \(1<p<\infty\). The second is formulated for continuous
circle-valued maps in terms of a nonlocal functional with a threshold $\delta>0$.  Our goal is to sharpen the constants in the degenerate regimes, as \(p\to1^+\) or \(\delta\to0^+\). 

We first recall the two estimates. The following estimate is the \(p=2\) case of \cite[Theorem 5.1]{Brezis2023Favorite}; see also
\cite[Corollary 0.5 and formula (0.7)]{BourgainBrezisMironescu2005}.  
\begin{theorem}[$H^{1/2}$ Degree estimate]\label{thm:hhalf}
There exists a universal constant \(C_0\) such that every
\(g\in H^{1/2}(\Sone;\Sone)\) satisfies
\[
 |\deg g|\le C_0
 \int_{\Sone}\int_{\Sone}
 \frac{|g(x)-g(y)|^2}{|x-y|^2}\,\mathrm{d} x \, \mathrm{d} y .
\]
\end{theorem}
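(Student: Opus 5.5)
The plan is to go through Fourier series. I first treat smooth \(g\) and then pass to general \(g\in H^{1/2}(\Sone;\Sone)\) by density together with the VMO continuity of the degree. Smooth maps \(\Sone\to\Sone\) are dense in \(H^{1/2}(\Sone;\Sone)\). The \(H^{1/2}\)-seminorm dominates the BMO seminorm, so the Brezis--Nirenberg degree is continuous along the approximation. The right-hand side also converges. Hence it suffices to prove the inequality for smooth \(g\).

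For smooth \(g\) I parametrize \(\Sone\) by \(\theta\in[0,2\pi)\) and expand \(g=\sum_{n\in\Z}a_n e^{in\theta}\). Two Fourier identities drive the argument.
\begin{itemize}
\item Since \(|g|=1\), the degree formula \(\deg g=\frac1{2\pi i}\int_{\Sone}\bar g\,g'\,d\theta\) becomes
\[
\deg g=\sum_n n|a_n|^2,\qquad \sum_n|a_n|^2=1 .
\]
\item With chordal distance \(|x-y|=2|\sin\frac{\theta-\phi}2|\), a direct computation gives
\[
\int\!\!\int\frac{|g(x)-g(y)|^2}{|x-y|^2}\,dx\,dy=2\pi\sum_n |a_n|^2\int_0^{2\pi}\frac{|e^{int}-1|^2}{4\sin^2(t/2)}\,dt .
\]
\end{itemize}
The kernel identity \(|e^{int}-1|^2/(4\sin^2(t/2))=|D_n(t)|^2\) holds, where \(D_n\) is a geometric sum with \(|n|\) terms of modulus \(1\). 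By Parseval, \(\int_0^{2\pi}|D_n|^2\,dt=2\pi|n|\). The double integral therefore equals \(4\pi^2\sum_n|n||a_n|^2\).

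Combining the two identities gives
\[
|\deg g|\le\sum_n|n||a_n|^2=\frac1{4\pi^2}\int\!\!\int\frac{|g(x)-g(y)|^2}{|x-y|^2}\,dx\,dy,
\]
so \(C_0=1/(4\pi^2)\) works. The main technical step is confirming the kernel computation under the chordal metric and arclength normalization; up to constants it is standard.

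The approximation step is the other point to check. I would use the standard density of \(C^\infty(\Sone;\Sone)\) in \(H^{1/2}(\Sone;\Sone)\). Alternatively, I would mollify \(g\) and then project \(g_\varepsilon/|g_\varepsilon|\). This is legitimate because VMO forces \(|g_\varepsilon|\to1\) uniformly, by the Brezis--Nirenberg argument. The seminorm of the projected map converges, and its degree equals \(\deg g\) for small \(\varepsilon\).
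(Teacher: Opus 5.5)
Your proof is correct, and it takes a different route from the paper only in the sense that the paper gives no proof of Theorem~\ref{thm:hhalf} at all. It quotes the result as the case $p=2$ of \cite[Theorem 5.1]{Brezis2023Favorite} (see also \cite[Corollary 0.5]{BourgainBrezisMironescu2005}), which is a $W^{1/p,p}$ estimate valid for all $1<p<\infty$ with an unspecified constant $C_p$.

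You instead use the Hilbert structure available only at $p=2$. Both of your identities are right under the paper's normalization (arclength measure, chordal distance):
\begin{itemize}
\item $\deg g=\sum_n n|a_n|^2$;
\item $\int_{\Sone}\int_{\Sone}|g(x)-g(y)|^2|x-y|^{-2}\,\mathrm{d}x\,\mathrm{d}y=4\pi^2\sum_n|n|\,|a_n|^2$, which follows from $|e^{int}-1|^2/|e^{it}-1|^2=|D_{|n|}(t)|^2$ and Parseval.
\end{itemize}
The passage to general $g$ via density and $\BMO$-continuity of the degree is also legitimate, since $H^{1/2}$ embeds continuously into $\BMO$.

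Your route buys two things.
\begin{itemize}
\item An explicit constant $C_0=1/(4\pi^2)$, which is in fact optimal: equality holds for $g(z)=z^n$, every $n\in\Z$.
\item A self-contained Theorem~\ref{thm:op51}, modulo the VMO degree theory, with an explicit constant $c$. This works because the paper invokes Theorem~\ref{thm:hhalf} only at $p=2$, namely in Lemmas~\ref{lem:hhalf-powers} and~\ref{lem:large-p}.
\end{itemize}
The citation is shorter and covers every $p$, but the paper never needs $p\neq2$.

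On the approximation step you flagged: you need only an upper bound on the energy of $g_\varepsilon/|g_\varepsilon|$, not convergence.
\begin{itemize}
\item For a nonnegative mollifier, Jensen's inequality and the rotation invariance of the kernel show that the double integral for $g_\varepsilon$ is at most the one for $g$.
\item The map $z\mapsto z/|z|$ is $m_\varepsilon^{-1}$-Lipschitz on $\{|z|\ge m_\varepsilon\}$, where $m_\varepsilon=\min|g_\varepsilon|\to1$. So the energy of $g_\varepsilon/|g_\varepsilon|$ is at most $m_\varepsilon^{-2}$ times that of $g$.
\item For small $\varepsilon$, $\deg(g_\varepsilon/|g_\varepsilon|)=\deg g$. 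This holds because $g_\varepsilon\to g$ in $H^{1/2}$ and $g_\varepsilon/|g_\varepsilon|-g_\varepsilon\to0$ uniformly, hence $g_\varepsilon/|g_\varepsilon|\to g$ in $\BMO$.
\end{itemize}
The smooth inequality then passes directly to the limit.
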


The integral in Theorem \ref{thm:hhalf} is equivalent to the square of the
\(H^{1/2}\)-seminorm. Since \(|z-w|^2\le 2^{2-p}|z-w|^p\) for \(z,w\in\Sone\)
and \(1<p\le2\), one obtains the classical estimate
\[
 |\deg f|\le C_p
 \int_{\Sone}\int_{\Sone}
 \frac{|f(x)-f(y)|^p}{|x-y|^2}\,\mathrm{d} x \, \mathrm{d} y,
 \qquad f\in W^{1/p,p}(\Sone;\Sone),
\]
with a constant \(C_p\) depending on \(p\). In the form recorded by Brezis, this
estimate holds for every \(1<p<\infty\) \cite[Theorem 5.1]{Brezis2023Favorite}.
As observed there, however, the right-hand side diverges as \(p\downarrow1\)
unless \(f\) is constant. It is therefore natural to ask whether the constant can
be chosen with the compensating factor \(p-1\).

We next recall the threshold estimate. For \(f\in C(\Sone;\Sone)\) and
\(\delta>0\), define
\begin{equation}\label{eq:Idelta}
I_\delta(f)=
\int_{\Sone}\int_{\Sone}
\frac{\ind_{\{|f(x)-f(y)|\ge\delta\}}}{|x-y|^2}\,\mathrm{d} x \, \mathrm{d} y .
\end{equation}

The following theorem is the fixed-threshold estimate proved by Nguyen \cite{Nguyen2007} and recorded as \cite[Theorem 5.2]{Brezis2023Favorite}.

\begin{theorem}[Nguyen's optimal threshold estimate \cite{Nguyen2007}; cf. \cite{Brezis2023Favorite}]\label{thm:fixed-sqrt3}
There exists a universal constant \(C_1\) such that every continuous map
\(g:\Sone\to\Sone\) satisfies
\[
 |\deg g|\le C_1 I_{\sqrt3}(g).
\]
\end{theorem}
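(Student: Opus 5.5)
The plan is to argue entirely at the level of a lifting. Write \(g=e^{i\varphi}\) with \(\varphi:[0,2\pi]\to\mathbb R\) continuous and \(\varphi(2\pi)-\varphi(0)=2\pi d\), \(d=\deg g\). By symmetry assume \(d\ge1\); the case \(d=0\) is trivial. The elementary observation driving everything is that for \(z,w\in\Sone\), \(|z-w|\ge\sqrt3\) holds exactly when the angular distance between \(z\) and \(w\) lies in \([2\pi/3,4\pi/3]\) modulo \(2\pi\). Hence a pair \((x,y)\) contributes to \(I_{\sqrt3}(g)\) as soon as \(|\varphi(x)-\varphi(y)|\in[2\pi/3,4\pi/3]\). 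Since \(|x-y|\) is comparable to the parametric distance on the circle, I would work with \(|x-y|^{-2}\) on \([0,2\pi]\). The goal is to show \(I_{\sqrt3}(g)\ge c\,d\) with \(c\) universal.

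\textbf{Step 1 (decomposition).} Choose points \(0\le t_0<t_1<\dots<t_{3d}\le 2\pi\), where \(t_j\) is the first time \(\varphi\) reaches \(\varphi(0)+2\pi j/3\). Group them into about \(d\) disjoint parameter intervals \(J_k=[t_{3k},t_{3k+2}]\), or more generally blocks on which \(\varphi\) increases by a fixed amount strictly between \(2\pi/3\) and \(4\pi/3\) (for instance \(\pi\)). I would restrict the double integral to \(\bigcup_k J_k\times J_k\). These diagonal blocks are disjoint, so \(I_{\sqrt3}(g)\ge\sum_k \int_{J_k}\int_{J_k}\)(integrand). It then suffices to prove a scale-invariant local lemma.

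\textbf{Step 2 (local lemma).} The lemma to prove is: if \(\psi\) is continuous on an interval \(J=[a,b]\) with \(\psi(a)=\alpha\) and \(\psi(b)=\alpha+\pi\), then
\[
\int_J\int_J \frac{\ind_{\{|\psi(x)-\psi(y)|\in[2\pi/3,4\pi/3]\}}}{|x-y|^2}\,\mathrm{d}x\,\mathrm{d}y\ \ge c>0 .
\]
Because the kernel \(|x-y|^{-2}\) in one dimension is invariant under dilations, the length of \(J\) is irrelevant, and this is the source of a constant independent of the block. To prove it I would average over a level \(\lambda\in[\alpha,\alpha+\pi/3]\). For each such \(\lambda\) set
\[
A_\lambda=\{\psi\le\lambda\},\qquad B_\lambda=\{\psi\ge\lambda+2\pi/3\}\cap\{\psi\le\lambda+4\pi/3\}.
\]
Then every pair in \(A_\lambda\times B_\lambda\) with \(\psi\ge\alpha-\pi/3\) on \(A_\lambda\) is counted. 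I would also replace \(J\) by a minimal subinterval on which \(\psi\) moves from \(\alpha\) to \(\alpha+\pi\) without leaving \([\alpha,\alpha+\pi]\), which is possible by continuity and first and last passage times. With this choice both sets are genuinely nonempty near the respective endpoints, and all values stay in a window of width \(\pi<4\pi/3\).

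\textbf{Main obstacle.} The hardest step is the lower bound for the local lemma. The intermediate set \(\{\lambda<\psi<\lambda+2\pi/3\}\) may occupy almost all of \(J\), so a naive bound of \(\int_{A}\int_{B}|x-y|^{-2}\) is useless. To handle this, I would iterate a dichotomy on dyadic subintervals. Either \(\psi\) crosses from \(\le\lambda\) to \(\ge\lambda+2\pi/3\) within a short subinterval, where the sets are adjacent and a one-dimensional Hardy-type estimate applies: for disjoint sets \(A\) and \(B\) with \(A\ni\) left part, \(\int_A\int_B|x-y|^{-2}\gtrsim\) a logarithm of the ratio of distances. Or else the passage is slow, and one rescales to a subinterval where the same configuration reappears. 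Continuity of \(\psi\) guarantees that some scale realizes a nondegenerate crossing, which yields a uniform \(c\). If this dichotomy becomes unwieldy, the fallback is to prove the lemma by contradiction and compactness in the scale-invariant class of such \(\psi\), or to invoke the fact that the threshold \(\sqrt3\) sits below the critical value \(2\), where counterexamples are known to appear, so some strictly positive gap in the angular window is available. Finally, summing the lemma over the roughly \(d\) blocks \(J_k\) gives \(I_{\sqrt3}(g)\ge c\,d\), which is the statement with \(C_1=1/c\).
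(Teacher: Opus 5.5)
The paper does not prove this statement; it imports it from \cite{Nguyen2007}. Your proposal has a genuine gap: the local lemma of Step~2 is false, so no dyadic dichotomy or compactness argument can establish it.

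Take $J=[0,1]$ and let $\psi$ rise linearly from $\alpha$ to $\alpha+\pi/2$ on $[0,\epsilon]$, equal $\alpha+\pi/2$ on $[\epsilon,1-\epsilon]$, and rise linearly to $\alpha+\pi$ on $[1-\epsilon,1]$. It stays in $[\alpha,\alpha+\pi]$, and $J$ is already minimal in your sense. The sets $\psi([0,1-\epsilon])$ and $\psi([\epsilon,1])$ each have oscillation $\pi/2<2\pi/3$. Hence any pair with $|\psi(x)-\psi(y)|\ge 2\pi/3$ has one point in $[0,\epsilon)$ and the other in $(1-\epsilon,1]$, and the integral is at most $2\epsilon^2/(1-2\epsilon)^2\to0$.

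A three-step staircase defeats the lemma in the same way for every block increment $\beta<2\pi$, including $4\pi/3$: use three short steps of height $\beta/3$ separated by two long plateaus. Your fallback of a ``gap below the critical value $2$'' does not exist either. The threshold $\sqrt3$ is itself critical: for $\delta>\sqrt3$, a degree-one map taking the values $1,e^{2\pi i/3},e^{4\pi i/3}$ on three arcs joined by transitions of length $\epsilon$ has $I_\delta=O(\epsilon)$.

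The underlying problem is Step~1. Restricting to $\bigcup_k J_k\times J_k$ discards exactly the interactions that carry the estimate at the critical threshold. Let $g$ have degree one, with lift equal to $0,\pi/2,\pi,3\pi/2$ on four consecutive arcs $P_0,\dots,P_3$ of length about $\pi/2$, joined by transitions of length $\epsilon$. For your blocks of increase $\pi$, every counted pair inside a block has one point in a transition of length $\epsilon$ and the other at distance about $\pi/2$ from it. So $\sum_k\int_{J_k}\int_{J_k}=O(\epsilon)$, while $\deg g=1$.

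What keeps $I_{\sqrt3}(g)$ bounded below are the off-diagonal pairs $P_0\times P_2$ and $P_1\times P_3$, each at angular distance $\pi$. The measure $\mathrm{d}x\,\mathrm{d}y/|x-y|^2$ is Möbius invariant (the Cayley transform sends it exactly to $\mathrm{d}u\,\mathrm{d}v/(u-v)^2$). Combining this with Ptolemy's identity, for four consecutive arcs partitioning $\Sone$ these two interactions equal $\log(1/\chi)$ and $\log(1/(1-\chi))$ for a cross-ratio $\chi\in(0,1)$. Their sum is therefore at least $\log 4$ however the lengths are distributed, up to $o(1)$ from the transitions.

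This is a global effect that depends on the loop closing up. A correct proof has to exploit interactions between non-adjacent pieces of a full turn, which a decomposition into diagonal blocks with lift increment below $2\pi$ cannot see.
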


The threshold \(\sqrt3\) is optimal in Theorem \ref{thm:fixed-sqrt3}; see
\cite{Nguyen2007}. Since \(I_\delta(f)\) is nonincreasing as \(\delta\) increases,
Theorem \ref{thm:fixed-sqrt3} implies
\[
 |\deg f|\le C_1 I_\delta(f),
 \qquad 0<\delta\le\sqrt3.
\]
This bound deteriorates as \(\delta\downarrow0\), because \(I_\delta(f)\) tends to
infinity for nonconstant continuous maps. Brezis therefore asked whether the
constant can be replaced by one of order \(\delta\). Both the factor $\delta$ here and the previous factor $p-1$ were predicted in \cite{Brezis2023Favorite} using a connection to the Cauchy formula.

The two main results of the paper answer these questions affirmatively. 

\begin{theorem}[Endpoint nonlocal degree estimate; Open Problem 5.1 of \cite{Brezis2023Favorite}]\label{thm:op51}
There exists a universal constant \(c\) such that, for every \(1<p\le2\) and
every \(f\in W^{1/p,p}(\Sone;\Sone)\),
\[
|\deg f|\le c(p-1)
\int_{\Sone}\int_{\Sone}
\frac{|f(x)-f(y)|^p}{|x-y|^2}\,\mathrm{d} x \, \mathrm{d} y .
\]
\end{theorem}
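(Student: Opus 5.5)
The plan is to combine Theorem \ref{thm:hhalf} with a power trick and then average over the power. Write \(d=\deg f\) and, for \(x,y\in\Sone\), set \(t=t(x,y)=|f(x)-f(y)|\in[0,2]\). For each integer \(k\ge1\), consider \(g_k=f^k\). Then \(g_k\in H^{1/2}(\Sone;\Sone)\): first approximate \(f\) by smooth circle maps in \(W^{1/p,p}\), using \(\BMO\)-continuity of the degree, so that all maps may be taken smooth. We have \(\deg g_k=kd\). Since \(z\mapsto z^k\) is \(k\)-Lipschitz on \(\Sone\) and \(|g_k|=1\),
\[
|g_k(x)-g_k(y)|^2\le \min(k^2t^2,4).
\]
Theorem \ref{thm:hhalf} therefore gives, for every \(k\ge1\),
\[
k|d|\le C_0\int_{\Sone}\int_{\Sone}\frac{\min(k^2t^2,4)}{|x-y|^2}\,\mathrm dx\,\mathrm dy .
\]
Using a single \(k\) only yields \(|d|\lesssim k^{p-1}\iint t^p|x-y|^{-2}\), with no gain. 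The gain must come from summing over \(k\) with weights \(k^{-1-p}\).

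Multiply the \(k\)-th inequality by \(k^{-1-p}\) and sum over \(k\ge1\). The left side becomes \(|d|\sum_k k^{-p}\ge |d|/(p-1)\). On the right, Tonelli lets us interchange the sum with the integral. It then suffices to show the pointwise bound
\[
S(t):=\sum_{k\ge1}k^{-1-p}\min(k^2t^2,4)\le C\,t^p,\qquad t\in(0,2],
\]
with \(C\) uniform for \(p\in(1,3/2]\).

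To prove this, split the sum at \(K=2/t\ge1\).
\begin{itemize}
\item For \(k\le K\) the summand is \(t^2k^{1-p}\), and comparison with an integral gives at most \(t^2\bigl(1+K^{2-p}/(2-p)\bigr)\lesssim t^p\), since \(2-p\ge1/2\) and \(t^2\le 4^{1-p/2}t^p\).
\item For \(k>K\) the summand is \(4k^{-1-p}\), and the tail is at most \(4\bigl(K^{-1-p}+K^{-p}/p\bigr)\lesssim t^p\).
\end{itemize}
Together these give \(|d|\le C_0C(p-1)\iint t^p|x-y|^{-2}\) for \(1<p\le3/2\).

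For \(3/2\le p\le2\), no averaging is needed. The elementary inequality \(t^2\le 2^{2-p}t^p\) applied to \(f\) directly in Theorem \ref{thm:hhalf} gives \(|d|\le 2C_0\iint t^p|x-y|^{-2}\le 4C_0(p-1)\iint t^p|x-y|^{-2}\), since \(p-1\ge1/2\). Taking the larger of the two constants finishes the proof.

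The main obstacle is the bookkeeping in the splitting of \(S(t)\): the sub-sum \(k\le K\) carries a factor \(1/(2-p)\), which is exactly why the range \(p\) near \(2\) is handled separately. One must also check that the weight \(k^{-1-p}\) produces exactly the divergence \(1/(p-1)\) on the left while keeping \(S(t)\lesssim t^p\) uniformly on the right. The density and degree-continuity step is routine given the results of Brezis and Nirenberg.
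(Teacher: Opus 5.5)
Your proposal is correct and follows essentially the same route as the paper. It uses weights $k^{-1-p}$ (the paper normalizes them by $\sum_j j^{-p}$) and the pointwise bound $\sum_k k^{-1-p}\min(k^2t^2,4)\lesssim t^p$, uniform for $1<p\le 3/2$ via a split at $k\sim 1/t$. For $3/2\le p\le 2$ it applies Theorem \ref{thm:hhalf} directly.

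One small difference: the paper skips your density and BMO-continuity step. It notes that $\min(k^2t^2,4)\le 2^{2-p}k^2t^p$ already gives $f^k\in H^{1/2}$, and uses $\deg(f^k)=k\deg f$ directly for VMO maps.
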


\begin{theorem}[Small-threshold degree estimate; Open Problem 5.3 of \cite{Brezis2023Favorite}]\label{thm:op53}
There exists a universal constant \(c\) such that, for every continuous
\(f:\Sone\to\Sone\) and every \(0<\delta\le\sqrt3\),
\[
|\deg f|\le c\delta I_\delta(f).
\]
\end{theorem}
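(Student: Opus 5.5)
The plan is to prove the estimate by a \emph{power trick}: replace \(f\) by \(g=f^m\) for a well-chosen integer \(m\approx 1/\delta\), and apply Theorem \ref{thm:fixed-sqrt3} to \(g\). The map \(g=f^m\) is continuous from \(\Sone\) to \(\Sone\) and satisfies \(\deg g=m\deg f\). Hence Theorem \ref{thm:fixed-sqrt3} gives
\[
m|\deg f|\le C_1 I_{\sqrt3}(f^m).
\]
It therefore suffices to compare the superlevel sets of the two integrands, namely to show
\[
\{(x,y):|f(x)^m-f(y)^m|\ge\sqrt3\}\subset\{(x,y):|f(x)-f(y)|\ge\delta\}.
\]
This inclusion gives \(I_{\sqrt3}(f^m)\le I_\delta(f)\), and so \(|\deg f|\le (C_1/m)\,I_\delta(f)\).

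The key step is an elementary estimate on chords. Write \(f(x)\overline{f(y)}=e^{i\theta}\) with \(\theta\in[-\pi,\pi]\). Then \(|f(x)-f(y)|=2\sin(|\theta|/2)\) and \(|f(x)^m-f(y)^m|=|e^{im\theta}-1|\). A chord of length at least \(\sqrt3\) means an angle at distance at least \(2\pi/3\) from \(2\pi\Z\). So \(|e^{im\theta}-1|\ge\sqrt3\) forces \(m|\theta|\ge 2\pi/3\), that is, \(|\theta|/2\ge \pi/(3m)\). Since \(\pi/(3m)\le\pi/2\), Jordan's inequality \(\sin t\ge 2t/\pi\) yields
\[
|f(x)-f(y)|\ge 2\sin\!\Big(\frac{\pi}{3m}\Big)\ge \frac{4}{3m}.
\]
Thus the inclusion holds as soon as \(m\le 4/(3\delta)\).

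The choice of \(m\) splits into two cases.
\begin{itemize}
\item If \(4/3<\delta\le\sqrt3\), take \(m=1\). Then \(I_\delta(f)\ge I_{\sqrt3}(f)\), so \(|\deg f|\le C_1 I_\delta(f)\le \tfrac34 C_1\,\delta\, I_\delta(f)\).
\item If \(\delta\le 4/3\), take \(m=\lfloor 4/(3\delta)\rfloor\ge1\). Since \(\lfloor s\rfloor\ge s/2\) for \(s\ge1\), we get \(1/m\le 3\delta/2\), and so \(|\deg f|\le \tfrac32 C_1\,\delta\, I_\delta(f)\).
\end{itemize}
In both cases the theorem holds with \(c=\tfrac32 C_1\).

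I expect no serious obstacle. The only delicate points are handling the angle \(m\theta\) modulo \(2\pi\) correctly, which is done by using only the lower bound \(m|\theta|\ge 2\pi/3\), and checking that \(m\ge1\) across the whole range \(0<\delta\le\sqrt3\), which is why the range is split as above.
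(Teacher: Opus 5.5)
Your proof is correct and follows essentially the paper's argument: apply Theorem~\ref{thm:fixed-sqrt3} to $f^m$ with $m\simeq\delta^{-1}$, use $\deg(f^m)=m\deg f$, and compare the superlevel sets to get $I_{\sqrt3}(f^m)\le I_\delta(f)$. The differences are cosmetic. The paper chooses $n=\lfloor (2\pi/3)/(2\arcsin(\delta/2))\rfloor$, which is $\ge1$ on the whole range, and proves the inclusion by a no-wrap-around argument. You instead use the cruder Jordan bound $2\sin(\pi/(3m))\ge 4/(3m)$ with $m=\lfloor 4/(3\delta)\rfloor$, which forces you to treat $\delta>4/3$ separately via monotonicity of $I_\delta$ in $\delta$.
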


Theorem \ref{thm:op51} answers \cite[Open Problem 5.1]{Brezis2023Favorite},
which goes back to \cite[Remark 7]{Brezis2006NewQuestions}. Theorem
\ref{thm:op53} answers \cite[Open Problem 5.3]{Brezis2023Favorite}, previously
posed in \cite[Open Problem 3]{Brezis2006NewQuestions}. An analogue of Theorem
\ref{thm:op53} for maps \(\mathbb S^N\to\mathbb S^N\) with \(N\ge2\) was proved
by Nguyen \cite{Nguyen2017RefinedDegree}; the circle case is the one treated here.

The proofs use a power identity of the degree: if \(f\in\VMO(\Sone;\Sone)\) and \(k\ge1\) is an integer, then
\(f^k\in\VMO(\Sone;\Sone)\) and
\begin{equation}\label{power-fk}
    \deg(f^k)=k\deg f.
\end{equation} This is a basic property of the degree and follows from e.g. $\deg fg= \deg f+ \deg g$ for two VMO maps $f,g$, see \cite[Theorem 12.1]{BrezisMironescu2021Sobolev}. It can also be viewed as a functorial identity, since \(f^k=P_k\circ f\), where
\(P_k:\Sone\to\Sone\), \(P_k(z)=z^k\) and  $\deg f^k=\deg(P_k)\deg f=k\deg f$.

Both proofs use the same power trick. In the proof of Theorem \ref{thm:op51} we
apply Theorem \ref{thm:hhalf} to the powers \(f^k\) and average over \(k\) with
weights chosen so that the endpoint factor \(p-1\) appears. In the proof of
Theorem \ref{thm:op53} we choose a single power \(n\simeq\delta^{-1}\); the map
\(f^n\) converts the threshold \(\delta\) into the fixed threshold \(\sqrt3\), and
\(\deg(f^n)=n\deg f\) yields the extra factor \(\delta\).

The proofs of Theorems \ref{thm:op51} and \ref{thm:op53} were obtained by the generative AI system Rethlas \cite{JuEtAl2026Rethlas}\footnote{The raw output of
Rethlas is available at
\href{https://github.com/frenzymath/Rethlas_results/tree/main/analysis/Brezis_OpenProblems}{Rethlas Results Homepage}.} and were subsequently checked and
verified by the authors.

The remainder of the paper is organized as follows. Section 2 proves Theorem
\ref{thm:op51}. Section 3 proves Theorem \ref{thm:op53}.

\section{Endpoint Nonlocal Degree Estimate}

This section proves Theorem \ref{thm:op51}.

\begin{lemma}\label{lem:hhalf-powers}
There is a universal constant \(C_0\) such that, whenever
\(f\in\VMO(\Sone;\Sone)\), \(k\ge1\), and \(f^k\in H^{1/2}(\Sone;\Sone)\),
\[
 k|\deg f|\le C_0
 \int_{\Sone}\int_{\Sone}
 \frac{|f(x)^k-f(y)^k|^2}{|x-y|^2}\,\mathrm{d} x \, \mathrm{d} y .
\]
\end{lemma}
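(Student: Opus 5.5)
The plan is to apply Theorem \ref{thm:hhalf} to the map \(g=f^k\) and then convert its degree with the power identity \eqref{power-fk}. By hypothesis \(g=f^k\) lies in \(H^{1/2}(\Sone;\Sone)\). Theorem \ref{thm:hhalf} therefore gives
\[
|\deg(f^k)|\le C_0\int_{\Sone}\int_{\Sone}\frac{|f(x)^k-f(y)^k|^2}{|x-y|^2}\,\mathrm{d} x\,\mathrm{d} y ,
\]
with the same universal constant \(C_0\).

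Next I rewrite the left-hand side. Since \(f\in\VMO(\Sone;\Sone)\) and \(k\ge1\) is an integer, \eqref{power-fk} gives \(\deg(f^k)=k\deg f\). The degree on the left is the one defined in Theorem \ref{thm:hhalf}, namely the Brezis--Nirenberg VMO degree, because \(H^{1/2}(\Sone;\Sone)\subset\VMO(\Sone;\Sone)\). Hence \(|\deg(f^k)|=k|\deg f|\), and substituting this yields the claim.

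The only point requiring care is consistency of the two notions of degree. The degree of \(f^k\) in Theorem \ref{thm:hhalf} is computed for \(f^k\) viewed as an \(H^{1/2}\) map. The identity \eqref{power-fk} is stated for \(f^k\) viewed as a VMO map. These coincide, since the VMO degree is intrinsic to the function \(f^k\in\VMO\) and does not depend on which space we regard it as belonging to.

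No regularity of \(f\) itself beyond VMO is needed; in particular \(f\) need not belong to \(H^{1/2}\). I therefore expect no genuine obstacle in this argument.
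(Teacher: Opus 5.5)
Your proposal is correct and is the same as the paper's proof: apply Theorem~\ref{thm:hhalf} to \(g=f^k\), then substitute \(\deg(f^k)=k\deg f\) from \eqref{power-fk}. Your remark that the VMO degree of \(f^k\) does not depend on whether \(f^k\) is viewed as an \(H^{1/2}\) map or a VMO map is accurate, and the paper leaves it implicit.
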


\begin{proof}
Apply Theorem \ref{thm:hhalf} to \(g=f^k\) and use
\(\deg(f^k)=k\deg f\).
\end{proof}
The main technical lemma is below, which is for the case $1<p\leq 3/2$.
\begin{lemma}\label{lem:weighted-powers}
There is a universal constant \(C_2\) such that for every \(1<p\le 3/2\) one can
choose positive weights
\[
a_k=\frac{k^{-p-1}}{\sum_{j=1}^\infty j^{-p}},\qquad k\ge1,
\]
for which
\[
\sum_{k=1}^\infty a_k k=1
\]
and, for every \(z,w\in\Sone\),
\[
\sum_{k=1}^\infty a_k |z^k-w^k|^2
\le C_2 (p-1)|z-w|^p .
\]
\end{lemma}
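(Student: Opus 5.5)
The normalization is immediate: since \(a_k k = k^{-p}/\sum_j j^{-p}\), summing over \(k\) gives \(\sum_k a_k k=1\). For the inequality, I will write \(z=e^{i\alpha}\), \(w=e^{i\beta}\) and set \(\theta=\alpha-\beta\in[-\pi,\pi]\). Then
\[
|z^k-w^k|=2|\sin(k\theta/2)|, \qquad |z-w|=2|\sin(\theta/2)|\asymp|\theta|,
\]
so it suffices to bound \(S(\theta):=\sum_k a_k\,4\sin^2(k\theta/2)\) by \(C(p-1)|\theta|^p\) uniformly in \(1<p\le 3/2\) and \(|\theta|\le\pi\). I will also use the normalizing constant: \(\zeta(p)=\sum_j j^{-p}\ge \int_1^\infty t^{-p}\,dt=1/(p-1)\), so \(1/\zeta(p)\le p-1\). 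This is exactly where the factor \(p-1\) comes from. It remains to show
\[
\sum_{k\ge1}k^{-p-1}\min(k^2\theta^2,4)\le C|\theta|^p
\]
with \(C\) independent of \(p\in(1,3/2]\).

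To prove this, I split at \(N=\lfloor 1/|\theta|\rfloor\ge0\), using \(\sin^2x\le\min(x^2,1)\).

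\emph{Small \(k\).} For \(k\le N\), the terms are bounded by \(\theta^2\sum_{k\le N}k^{1-p}\le \theta^2\bigl(1+\int_1^{N}t^{1-p}\,dt\bigr)\lesssim \theta^2N^{2-p}/(2-p)\). Since \(2-p\ge1/2\), this is \(\lesssim |\theta|^p\).

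\emph{Large \(k\).} For \(k>N\), the terms are bounded by \(4\sum_{k>N}k^{-p-1}\lesssim (N+1)^{-p}/p\), using \(\sum_{k\ge m}k^{-p-1}\le m^{-p-1}+\int_m^\infty t^{-p-1}\,dt\). Because \(N+1>1/|\theta|\), this is \(\lesssim|\theta|^p\), uniformly since \(p\ge1\).

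The case \(N=0\), i.e.\ \(|\theta|>1\), is trivial: \(\sum_k k^{-p-1}\le\zeta(2)\), and \(|\theta|^p\ge1\).

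The main thing to watch is uniformity in \(p\), which I expect to be the key step. The bound \(\zeta(p)^{-1}\le p-1\) has to hold for all \(p\), and it does by the integral comparison above. Also, neither tail estimate may carry a factor blowing up as \(p\downarrow1\). The only potential singularities are \(1/(2-p)\) and \(1/p\), and both are harmless on \((1,3/2]\). The restriction \(p\le 3/2\) is exactly what keeps the first of these bounded. Finally, converting \(|\theta|^p\) back to \(|z-w|^p\) costs only the factor \((\pi/2)^p\le(\pi/2)^{3/2}\), via \(|\theta|\le(\pi/2)|z-w|\) for \(|\theta|\le\pi\). This yields a universal constant \(C_2\).
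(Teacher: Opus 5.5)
Your proof is correct and follows the paper's argument. You bound \(|z^k-w^k|\) by \(\min\{k\rho,2\}\), split the sum at \(\lfloor\rho^{-1}\rfloor\), and use \(2-p\ge 1/2\) for the head and the \(K^{-p}\) bound for the tail, with the factor \(p-1\) coming from \(\sum_j j^{-p}\ge 1/(p-1)\). The only cosmetic difference is that the paper works directly with \(\rho=|z-w|\) via \(|z^k-w^k|\le k|z-w|\), so it needs no conversion between the angle \(\theta\) and the chordal distance.
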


\begin{proof}
The normalization gives
\[
\sum_{k=1}^\infty a_k k
=\frac{\sum_{k=1}^\infty k^{-p}}{\sum_{j=1}^\infty j^{-p}}
=1.
\]
Also
\[
\sum_{j=1}^\infty j^{-p}\ge \int_1^\infty t^{-p}\,dt=\frac1{p-1},
\]
so the reciprocal of the denominator is at most \(p-1\).

Put \(\rho=|z-w|\). Since \(|z|=|w|=1\),
\[
|z^k-w^k|\le k|z-w|=k\rho
\quad\text{and}\quad
|z^k-w^k|\le 2.
\]
Thus
\[
\sum_{k=1}^\infty k^{-p-1}|z^k-w^k|^2
\le
\sum_{k=1}^\infty k^{-p-1}\min\{k^2\rho^2,4\}.
\]
If \(\rho\ge1\), the right hand side is bounded by a universal constant, hence
by \(C\rho^p\). If \(0<\rho<1\), split the sum at \(K=\lfloor \rho^{-1}\rfloor\).
Then, since \(1<p\le3/2\),
\[
\rho^2\sum_{k\le K} k^{1-p}
\le C\rho^2 K^{2-p}
\le C\rho^p
\]
and
\[
\sum_{k>K} k^{-p-1}\le C K^{-p}\le C\rho^p.
\]
The case \(\rho=0\) is trivial. Multiplying by the reciprocal of
\(\sum_{j=1}^\infty j^{-p}\), which is at most \(p-1\), proves the claim.
\end{proof}

The case $3/2\le p\le 2$ is straightforward  as the main focus is the behavior when $p\rightarrow1^+$.
\begin{lemma}\label{lem:large-p}
There is a universal constant \(C_3\) such that for every \(3/2\le p\le2\) and
every \(f\in W^{1/p,p}(\Sone;\Sone)\),
\[
|\deg f|\le C_3(p-1)
\int_{\Sone}\int_{\Sone}
\frac{|f(x)-f(y)|^p}{|x-y|^2}\,\mathrm{d} x \, \mathrm{d} y .
\]
\end{lemma}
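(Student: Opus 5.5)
The plan is to reduce the lemma directly to Theorem \ref{thm:hhalf}, with no averaging over powers: for \(p\in[3/2,2]\) the factor \(p-1\) lies in \([1/2,1]\), so it costs only a universal multiplicative constant. The only ingredient beyond Theorem \ref{thm:hhalf} is the pointwise comparison of \(|z-w|^2\) with \(|z-w|^p\) on the circle, already noted in the introduction.

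The steps are as follows.

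First, I need \(f\) to qualify for Theorem \ref{thm:hhalf}. Since \(|f(x)-f(y)|\le 2\) and \(p\le 2\), we have pointwise
\[
|f(x)-f(y)|^2=|f(x)-f(y)|^{2-p}|f(x)-f(y)|^p\le 2^{2-p}|f(x)-f(y)|^p\le 2^{1/2}|f(x)-f(y)|^p,
\]
where the last inequality uses \(p\ge 3/2\). Hence the \(H^{1/2}\) double integral of \(f\) is finite whenever the \(W^{1/p,p}\) one is. Because \(f\) is bounded, this gives \(f\in H^{1/2}(\Sone;\Sone)\).

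Second, I apply Theorem \ref{thm:hhalf} to \(g=f\) and insert the pointwise bound:
\[
|\deg f|\le C_0\int_{\Sone}\int_{\Sone}\frac{|f(x)-f(y)|^2}{|x-y|^2}\,\mathrm{d}x\,\mathrm{d}y\le \sqrt2\,C_0\int_{\Sone}\int_{\Sone}\frac{|f(x)-f(y)|^p}{|x-y|^2}\,\mathrm{d}x\,\mathrm{d}y.
\]

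Third, since \(p-1\ge 1/2\), we have \(1\le 2(p-1)\). The bound therefore holds with \(C_3=2\sqrt2\,C_0\).

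The degree on the left is the VMO degree in both statements. It is the same object, since \(H^{1/2}\subset\VMO\) and \(W^{1/p,p}\subset \VMO\).

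I do not expect a real obstacle. The only point needing care is the membership step, namely that finiteness of the \(W^{1/p,p}\) seminorm of an \(\Sone\)-valued map implies \(f\in H^{1/2}\). This follows from the pointwise inequality above.
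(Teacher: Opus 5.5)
Your proposal is correct and matches the paper's proof: both use the pointwise bound $|z-w|^2\le 2^{2-p}|z-w|^p$ on $\Sone$ to get $f\in H^{1/2}$, apply Theorem \ref{thm:hhalf}, and absorb the factor $p-1\ge 1/2$ into the constant. Your explicit constant $C_3=2\sqrt2\,C_0$ is valid.
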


\begin{proof}
For \(z,w\in\Sone\) and \(p\le2\),
\[
|z-w|^2\le 2^{2-p}|z-w|^p.
\]
Therefore \(f\in H^{1/2}(\Sone;\Sone)\) and
\[
\int_{\Sone}\int_{\Sone}
\frac{|f(x)-f(y)|^2}{|x-y|^2}\,\mathrm{d} x \, \mathrm{d} y
\le
2^{2-p}
\int_{\Sone}\int_{\Sone}
\frac{|f(x)-f(y)|^p}{|x-y|^2}\,\mathrm{d} x \, \mathrm{d} y .
\]
Applying Theorem \ref{thm:hhalf} and using \(p-1\ge1/2\) gives the assertion
after increasing the universal constant.
\end{proof}

\begin{lemma}\label{lem:small-p}
There is a universal constant \(C_4\) such that for every \(1<p\le3/2\) and
every \(f\in W^{1/p,p}(\Sone;\Sone)\),
\[
|\deg f|\le C_4(p-1)
\int_{\Sone}\int_{\Sone}
\frac{|f(x)-f(y)|^p}{|x-y|^2}\,\mathrm{d} x \, \mathrm{d} y .
\]
\end{lemma}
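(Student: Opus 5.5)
The plan is to average Lemma \ref{lem:hhalf-powers} over the powers \(k\) with the weights \(a_k\) of Lemma \ref{lem:weighted-powers}, and then bound the weighted sum of integrands pointwise. Fix \(1<p\le3/2\) and \(f\in W^{1/p,p}(\Sone;\Sone)\). We may assume the right-hand side
\[
E_p(f):=\int_{\Sone}\int_{\Sone}\frac{|f(x)-f(y)|^p}{|x-y|^2}\,\mathrm{d} x\,\mathrm{d} y
\]
is finite, since otherwise there is nothing to prove. Then \(f\in\VMO(\Sone;\Sone)\), so \(\deg f\) and \(\deg(f^k)=k\deg f\) are well defined.

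The first step is to check that each power \(f^k\) lies in \(H^{1/2}(\Sone;\Sone)\), so that Lemma \ref{lem:hhalf-powers} applies. For \(z,w\in\Sone\) we have
\[
|z^k-w^k|^2\le \min\{k^2|z-w|^2,4\}\le C_k|z-w|^p,
\]
and hence the \(H^{1/2}\) energy of \(f^k\) is at most \(C_kE_p(f)<\infty\). The constant \(C_k\) is harmless here, because this step is only used qualitatively.

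The second step is the averaging. Lemma \ref{lem:hhalf-powers} gives, for each \(k\ge1\),
\[
k|\deg f|\le C_0\iint\frac{|f(x)^k-f(y)^k|^2}{|x-y|^2}\,\mathrm{d} x\,\mathrm{d} y.
\]
Multiply this by \(a_k>0\) and sum over \(k\). The normalization \(\sum_k a_kk=1\) turns the left-hand side into \(|\deg f|\). On the right, all terms are nonnegative, so Tonelli's theorem lets us interchange the sum and the integrals. We then apply the pointwise bound of Lemma \ref{lem:weighted-powers} with \(z=f(x)\) and \(w=f(y)\), which gives
\[
\sum_k a_k|f(x)^k-f(y)^k|^2\le C_2(p-1)|f(x)-f(y)|^p.
\]
Therefore \(|\deg f|\le C_0C_2(p-1)E_p(f)\), and we may take \(C_4=C_0C_2\).

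There is no serious obstacle, since the analytic content is already in Lemma \ref{lem:weighted-powers}. The only points needing care are the following.
\begin{itemize}
\item The hypothesis \(f^k\in H^{1/2}\) of Lemma \ref{lem:hhalf-powers} must hold for every \(k\); this is settled by the first step.
\item The interchange of the sum over \(k\) with the double integral is justified by nonnegativity, via Tonelli's theorem.
\item The identity \(\deg(f^k)=k\deg f\) for VMO maps is needed; it is identity \eqref{power-fk}.
\end{itemize}
Combining Lemma \ref{lem:small-p} with Lemma \ref{lem:large-p} then yields Theorem \ref{thm:op51} with \(c=\max\{C_3,C_4\}\).
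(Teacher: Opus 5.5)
Your proposal is correct and follows the paper's proof essentially verbatim. You average Lemma~\ref{lem:hhalf-powers} against the weights $a_k$, use $\sum_k a_k k=1$ and Tonelli, and apply the pointwise bound of Lemma~\ref{lem:weighted-powers}, giving $C_4=C_0C_2$. The only cosmetic difference is how you check $f^k\in H^{1/2}$: you use the crude bound $|z^k-w^k|^2\le k^2 2^{2-p}|z-w|^p$, whereas the paper reads it off from $a_k|z^k-w^k|^2\le C_2(p-1)|z-w|^p$.
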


\begin{proof}
Let \(a_k\) be the weights from Lemma \ref{lem:weighted-powers}. For each
\(k\ge1\), the positivity of \(a_k\) and Lemma \ref{lem:weighted-powers} give
the pointwise estimate
\[
a_k |f(x)^k-f(y)^k|^2
\le
C_2(p-1)|f(x)-f(y)|^p .
\]
After division by \(|x-y|^2\) and integration,
\[
a_k
\int_{\Sone}\int_{\Sone}
\frac{|f(x)^k-f(y)^k|^2}{|x-y|^2}\,\mathrm{d} x \, \mathrm{d} y
\le
C_2(p-1)
\int_{\Sone}\int_{\Sone}
\frac{|f(x)-f(y)|^p}{|x-y|^2}\,\mathrm{d} x \, \mathrm{d} y
<\infty .
\]
Since \(a_k>0\), this shows \(f^k\in H^{1/2}(\Sone;\Sone)\). Hence Lemma
\ref{lem:hhalf-powers} applies, and
\[
k|\deg f|
\le C_0
\int_{\Sone}\int_{\Sone}
\frac{|f(x)^k-f(y)^k|^2}{|x-y|^2}\,\mathrm{d} x \, \mathrm{d} y .
\]
Multiplying by \(a_k\), summing in \(k\), and using
\(\sum_k a_k k=1\), Tonelli's theorem, and Lemma
\ref{lem:weighted-powers}, we get
\[
\begin{aligned}
|\deg f|
&\le C_0
\int_{\Sone}\int_{\Sone}
\frac{\sum_{k=1}^\infty a_k |f(x)^k-f(y)^k|^2}{|x-y|^2}\,\mathrm{d} x \, \mathrm{d} y\\
&\le C_0C_2(p-1)
\int_{\Sone}\int_{\Sone}
\frac{|f(x)-f(y)|^p}{|x-y|^2}\,\mathrm{d} x \, \mathrm{d} y .
\end{aligned}
\]
This is the desired estimate.
\end{proof}

\begin{proof}[Proof of Theorem \ref{thm:op51}]
If \(3/2\le p\le2\), the estimate follows from Lemma
\ref{lem:large-p}. If \(1<p\le3/2\), it follows from Lemma
\ref{lem:small-p}. Taking
\[
c=\max\{C_3,C_4\}
\]
gives a universal constant valid for every \(1<p\le2\) and every
\(f\in W^{1/p,p}(\Sone;\Sone)\).
\end{proof}

\section{Threshold Integral Estimates}

This section proves Theorem \ref{thm:op53}. We first give a lemma which allows to connect $I_{\delta}(f)$ with $I_{\sqrt{3}}(f^n)$ with a suitable $n$.

\begin{lemma}\label{lem:power-threshold-inclusion}
Let \(0<\delta\le\sqrt3\), and put
\[
\alpha=2\arcsin(\delta/2),\qquad \alpha_0=2\pi/3.
\]
Choose
\[
n=\left\lfloor \frac{\alpha_0}{\alpha}\right\rfloor .
\]
Then \(n\ge1\), \(n\alpha\le\alpha_0\), and
\[
\frac1n\le \frac{2\alpha}{\alpha_0}\le C_5\delta
\]
for a universal constant \(C_5\). Moreover, for any \(a,b\in\Sone\),
\[
|a^n-b^n|\ge\sqrt3\quad\Longrightarrow\quad |a-b|\ge\delta .
\]
\end{lemma}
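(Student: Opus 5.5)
The plan is to pass from chordal distances to angular distances and then use the elementary fact that raising to the \(n\)-th power multiplies angles by \(n\). Recall that for \(a,b\in\Sone\) with angular distance \(\theta\in[0,\pi]\) (the length of the shorter arc between them), \(|a-b|=2\sin(\theta/2)\). Because this is strictly increasing on \([0,\pi]\), for \(\beta\in[0,\pi]\) we have \(|a-b|\ge 2\sin(\beta/2)\) if and only if \(\theta\ge\beta\). Here \(\delta=2\sin(\alpha/2)\) by the definition of \(\alpha\), and \(\sqrt3=2\sin(\alpha_0/2)\) since \(\alpha_0/2=\pi/3\).

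First the numerics. Since \(0<\delta\le\sqrt3\), we get \(0<\alpha\le\alpha_0\), so \(\alpha_0/\alpha\ge1\). Hence \(n\ge1\), and \(n\alpha\le\alpha_0\) holds by definition of the floor. For any \(x\ge1\) we have \(\lfloor x\rfloor\ge x/2\), which gives \(1/n\le 2\alpha/\alpha_0\). Next, \(\delta/2\in(0,\sqrt3/2]\), and on \([0,\sqrt3/2]\) the function \(\arcsin\) is convex with \(\arcsin(\sqrt3/2)=\pi/3\). Therefore \(\arcsin t\le (2\pi/(3\sqrt3))\,t\), so \(\alpha\le C\delta\), and we may take \(C_5=2C/\alpha_0=2/\sqrt3\).

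For the implication we argue by contraposition. Suppose \(|a-b|<\delta\). Write \(b=a e^{i\phi}\) with \(\phi\in(-\pi,\pi]\); then \(|\phi|\) is the angular distance, so \(|\phi|<\alpha\). It follows that \(b^n=a^n e^{in\phi}\), with \(|n\phi|<n\alpha\le\alpha_0<\pi\). The angular distance between \(a^n\) and \(b^n\) is therefore \(|n\phi|<\alpha_0\), and so \(|a^n-b^n|=2\sin(|n\phi|/2)<2\sin(\pi/3)=\sqrt3\).

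The only point needing care is that \(n\phi\) stays in \((-\pi,\pi)\), so that \(|n\phi|\) really is the angular distance between \(a^n\) and \(b^n\) and the monotonicity of \(2\sin(\cdot/2)\) applies. This follows from \(n\alpha\le\alpha_0<\pi\); everything else is routine.
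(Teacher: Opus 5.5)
Your proof is correct and follows the paper's argument essentially step for step. You use the floor bound $1/n\le 2\alpha/\alpha_0$, the linear bound $\alpha\le \frac{2\pi}{3\sqrt3}\delta$ (which you justify by convexity of $\arcsin$, with the explicit value $C_5=2/\sqrt3$, where the paper merely asserts it), and the contrapositive via angular distance with the no-wrap observation $n\alpha\le\alpha_0<\pi$.
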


\begin{proof}
Since \(0<\delta\le\sqrt3\), the corresponding angular threshold satisfies
\[
0<\alpha=2\arcsin(\delta/2)\le 2\arcsin(\sqrt3/2)=2\pi/3=\alpha_0,
\]
so \(n\ge1\) and \(n\alpha\le\alpha_0\).

If \(\alpha_0/\alpha<2\), then \(n=1\), hence
\[
\frac1n=1<\frac{2\alpha}{\alpha_0}.
\]
If \(\alpha_0/\alpha\ge2\), then
\[
n=\left\lfloor \frac{\alpha_0}{\alpha}\right\rfloor
\ge \frac12\,\frac{\alpha_0}{\alpha},
\]
and again \(1/n\le 2\alpha/\alpha_0\). Finally,
\[
\alpha=2\arcsin(\delta/2)\le \frac{2\pi}{3\sqrt3}\,\delta
\]
on \(0<\delta\le\sqrt3\), so \(1/n\le C_5\delta\) for a universal \(C_5\).

For the separation implication, let \(\theta\in[0,\pi]\) be the angular distance
between \(a\) and \(b\). Equivalently, choose arguments for \(a\) and \(b\) whose
difference has absolute value \(\theta\). Then \(|a-b|=2\sin(\theta/2)\). If
\(|a-b|<\delta\), then \(\theta<\alpha\). Hence
\[
n\theta<n\alpha\le\alpha_0<\pi.
\]
Since \(n\theta<\pi\), multiplication by \(n\) does not wrap the chosen angular
separation around modulo \(2\pi\). Thus the angular distance between \(a^n\) and
\(b^n\) is exactly \(n\theta\), and therefore
\[
|a^n-b^n|=2\sin(n\theta/2)<2\sin(\alpha_0/2)=\sqrt3.
\]
Taking the contrapositive gives the claim.
\end{proof}

\begin{lemma}\label{lem:power-energy-comparison}
Let \(f:\Sone\to\Sone\) be continuous and let \(n\) be as in Lemma
\ref{lem:power-threshold-inclusion}. Define \(g:\Sone\to\Sone\) by
\[
g(x)=f(x)^n.
\]
Then
\[
\deg g=n\deg f
\]
and
\[
I_{\sqrt{3}}(g)
\le
I_\delta(f).
\]
\end{lemma}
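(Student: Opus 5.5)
The plan has two parts. First we check that \(g\) is an admissible continuous circle map and compute its degree. Then we compare the two threshold energies by an inclusion of level sets.

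For the degree, note that \(g=P_n\circ f\) with \(P_n(z)=z^n\). It is continuous and \(\Sone\)-valued because \(|f(x)^n|=|f(x)|^n=1\). By the power identity \eqref{power-fk}, \(\deg(f^n)=n\deg f\). In the continuous setting this is just the classical fact: if \(f=e^{i\varphi}\) with a continuous lift \(\varphi\) on \([0,2\pi]\), then \(g=e^{in\varphi}\), and its lift \(n\varphi\) has total increment \(n\) times that of \(\varphi\). This requires \(n\ge1\) to be an integer, which Lemma \ref{lem:power-threshold-inclusion} guarantees.

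For the energy comparison, apply the separation implication of Lemma \ref{lem:power-threshold-inclusion} pointwise with \(a=f(x)\) and \(b=f(y)\). This gives the set inclusion
\[
\{(x,y): |g(x)-g(y)|\ge\sqrt3\}\subset\{(x,y): |f(x)-f(y)|\ge\delta\},
\]
and hence the pointwise bound \(\ind_{\{|g(x)-g(y)|\ge\sqrt3\}}\le \ind_{\{|f(x)-f(y)|\ge\delta\}}\) for all \(x,y\in\Sone\). Both sets are closed, since \(f\) and \(g\) are continuous, so both indicators are Borel measurable. The kernel \(|x-y|^{-2}\) is nonnegative. Multiplying the indicator inequality by the kernel and integrating over \(\Sone\times\Sone\), with monotonicity of the integral for nonnegative functions (the values \(+\infty\) being allowed), gives \(I_{\sqrt3}(g)\le I_\delta(f)\) as in \eqref{eq:Idelta}.

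There is no real obstacle here. The only care needed is that the angular non-wrapping argument already sits inside Lemma \ref{lem:power-threshold-inclusion}, and that the integrals may be infinite, which monotonicity handles.
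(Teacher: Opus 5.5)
Your proposal is correct and follows the paper's proof: the degree identity comes from the power identity \eqref{power-fk}, and the energy comparison comes from the set inclusion given by the separation implication of Lemma \ref{lem:power-threshold-inclusion}, integrated against the common nonnegative kernel. Your remarks on measurability (both sets are closed) and on possibly infinite integrals are harmless extra detail that the paper leaves implicit.
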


\begin{proof}
The degree identity is the same as \eqref{power-fk}.

By Lemma \ref{lem:power-threshold-inclusion}, the set
\[
\{(x,y): |g(x)-g(y)|\ge\sqrt3\}
\]
is contained in
\[
\{(x,y): |f(x)-f(y)|\ge\delta\}.
\]
The kernels are identical \eqref{eq:Idelta}, so integration over the smaller set gives the stated
energy comparison.
\end{proof}

\begin{proof}[Proof of Theorem \ref{thm:op53}]
Let \(0<\delta\le\sqrt3\), set \(\alpha=2\arcsin(\delta/2)\), and choose
\[
n=\left\lfloor\frac{2\pi/3}{\alpha}\right\rfloor .
\]
Let \(g=f^n\). By Theorem \ref{thm:fixed-sqrt3} and Lemma
\ref{lem:power-energy-comparison},
\[
n|\deg f|
=|\deg g|
\le
C_1
I_{\sqrt{3}}(g)
\le
C_1 I_\delta(f).
\]
Therefore
\[
|\deg f|\le \frac{C_1}{n}I_\delta(f).
\]
Lemma \ref{lem:power-threshold-inclusion} gives \(1/n\le C_5\delta\). Hence
\[
|\deg f|\le C_1C_5\,\delta\, I_\delta(f).
\]
Thus the desired estimate holds with the universal constant \(c=C_1C_5\).
\end{proof}

\section*{Acknowledgements}

The authors would like to thank the Rethlas team, namely Haocheng Ju, Jiedong
Jiang, Shurui Liu, Guoxiong Gao, Yuefeng Wang, Zeming Sun, Leheng Chen, Bin Wu,
Liang Xiao, and Bin Dong, for their contributions to the development of Rethlas \cite{JuEtAl2026Rethlas}.

\providecommand{\bysame}{\leavevmode\hbox to3em{\hrulefill}\thinspace}
\providecommand{\MR}{\relax\ifhmode\unskip\space\fi MR }
\providecommand{\MRhref}[2]{%
  \href{http://www.ams.org/mathscinet-getitem?mr=#1}{#2}
}
\providecommand{\href}[2]{#2}


\begin{thebibliography}{1}

\bibitem{BourgainBrezisMironescu2005}
Jean Bourgain, Ha{\"i}m Brezis, and Petru Mironescu, \emph{Lifting, degree, and
  distributional {J}acobian revisited}, Communications on Pure and Applied
  Mathematics \textbf{58} (2005), no.~4, 529--551.

\bibitem{Brezis2006NewQuestions}
Ha{\"i}m Brezis, \emph{New questions related to the topological degree}, The
  Unity of Mathematics, Progress in Mathematics, vol. 244, Birkh{\"a}user
  Boston, Boston, MA, 2006, pp.~137--154.

\bibitem{Brezis2023Favorite}
\bysame, \emph{Some of my favorite open problems}, Atti Accad. Naz. Lincei Cl.
  Sci. Fis. Mat. Natur. \textbf{34} (2023), no.~2, 307--335.

\bibitem{BrezisMironescu2021Sobolev}
Ha{\"i}m Brezis and Petru Mironescu, \emph{Sobolev maps to the circle: From the
  perspective of analysis, geometry, and topology}, Progress in Nonlinear
  Differential Equations and Their Applications, vol.~96, Birkh{\"a}user, New
  York, NY, 2021.

\bibitem{BrezisNirenberg1995}
Ha{\"i}m Brezis and Louis Nirenberg, \emph{Degree theory and {BMO}; part {I}:
  Compact manifolds without boundaries}, Selecta Mathematica, New Series
  \textbf{1} (1995), no.~2, 197--263.

\bibitem{JuEtAl2026Rethlas}
Haocheng Ju, Guoxiong Gao, Jiedong Jiang, Bin Wu, Zeming Sun, Leheng Chen,
  Yutong Wang, Yuefeng Wang, Zichen Wang, Wanyi He, et~al., \emph{Automated
  conjecture resolution with formal verification}, arXiv preprint
  arXiv:2604.03789 (2026), 1--28.

\bibitem{Nguyen2007}
Hoai-Minh Nguyen, \emph{Optimal constant in a new estimate for the degree},
  Journal d'Analyse Mathematique \textbf{101} (2007), 367--395.

\bibitem{Nguyen2017RefinedDegree}
\bysame, \emph{A refined estimate for the topological degree}, Comptes Rendus.
  Math\'{e}matique \textbf{355} (2017), no.~10, 1046--1049.

\end{thebibliography}
\end{document}